\documentclass[a4paper,fleqn]{cas-sc}

\usepackage[authoryear,longnamesfirst]{natbib}
\usepackage{amsmath}
\usepackage{amsthm}
\def\tsc#1{\csdef{#1}{\textsc{\lowercase{#1}}\xspace}}
\tsc{WGM}
\tsc{QE}
  
\def\TM{T_{max}} 
\newtheorem{theorem}{Theorem}[section]

\newtheorem{lemma}[theorem]{Lemma}

\begin{document}
\let\WriteBookmarks\relax
\def\floatpagepagefraction{1}
\def\textpagefraction{.001}

\shorttitle{Global solvability of a Gompertz chemotaxis model}    

\shortauthors{}  

\title [mode = title]{Uniform boundedness for the two-dimensional Keller--Segel system with Gompertz growth}  

\tnotemark[1] 

\tnotetext[1]{} 

%

\author[1]{Nohayla ALAOUI }






\affiliation[1]{organization={Faculty of Sciences Semlalia, Cadi Ayyad University},
            addressline={Laboratory of Mathematics, Modeling and Automatic Systems}, 
            city={Marrakech},
            postcode={Boulevard Prince Moulay Abdellah, B.P. 2390}, 
            country={Morocco}}

\author[1]{Mohamed HALLOUMI}





\author[2]{Giuseppe VIGLIALORO}
\cormark[1]





\affiliation[2]{organization={Università degli Studi di Cagliari},
            addressline={Dipartimento di Matematica e Informatica}, 
            city={Cagliari},
            postcode={Via Ospedale, 72,
09124},
            country={Italy}}

\cortext[1]{giuseppe.viglialoro@unica.it}



\begin{abstract}
It is well known that in two dimensions the classical Keller--Segel model may exhibit cell aggregation phenomena, which are automatically controlled when a logistic source with quadratic degradation is introduced. Considerable effort has been devoted to identifying weaker damping mechanisms that still reproduce the stabilizing effect of the quadratic term. In \cite{xiang2018sub}, it was shown that, under suitable assumptions on the initial distribution of the cell density, even sub-logistic sources of the form $
f(s) = a s - \frac{b s^2}{\ln(\ln (s + e))}$, $a,b>0$, and $s>0$,
can achieve this effect.
In the present paper, we introduce the Gompertz function
$
f(s) = \alpha s \ln\!\left(\frac{K}{s}\right)$, $\alpha,K>0$, and $s>0$, as an external source in minimal 2D chemotaxis model. This term exhibits a weaker damping effect than the prototype considered in \cite{xiang2018sub}. We investigate its influence on the system and establish conditions ensuring global existence and uniform boundedness of solutions.
\end{abstract}



\begin{keywords}
Chemotaxis model \sep Gompertz growth source \sep Boundedness\sep
\end{keywords}

\maketitle

\section{Introduction, motivation and presentation of the main result}
Chemotaxis refers to the directed movement of cells or organisms in response to chemical gradients. This process is essential in numerous biological systems. In addition, chemotaxis is a key factor in cancer cell migration and invasion. In mathematical biology, this phenomenon is generally categorized according to the characteristics of the cellular response to the extracellular signal.

The classical Keller--Segel chemotaxis model (\cite{keller1970initiation, keller1971model}), originally proposed in the 1970s to describe the directed movement of mobile cells, considers the cell density 
\(u = u(x,t)\), where \(x\) is the spatial variable and \(t\) represents time, moving toward a self-produced chemical signal \(v = v(x,t)\). 
The model is well-known for its ability to capture aggregation dynamics through the cross-diffusion term 
\(- \nabla \cdot (u \nabla v)\). The related formulation  is given by
\begin{equation}\label{Model-KS}
    \begin{cases}
        u_t = \Delta u - \chi \nabla \cdot (u\nabla v), & \textrm{on  }\; \Omega \times (0,T_{\max}),\\[4pt]
        \tau v_t = \Delta v - v + u, & \textrm{on  }\; \Omega \times (0,T_{\max}),\\[4pt]
        \textrm{$+$ boundary conditions (BC) $+$ initial data (ID)},
    \end{cases}
\end{equation}
where \(\Omega \subset \mathbb{R}^n\) ($n\geq 1$) is a bounded smooth domain, \(\chi >0\), \(\tau \in \{0,1\}\), and  $T_{\max}\in (0,\infty]$ indicates the upper bound of the time interval during which the system evolves.

The chemotactic aggregation mechanism is most transparent in the parabolic--elliptic case ($\tau=0$). 
Substituting the elliptic relation $\Delta v = v - u$ into the equation for $u$ shows heuristically that 
the cross-diffusion behaves like a quadratic term $\chi u^{2}$.  This aggregation may lead to finite- or infinite-time blow-up depending on the spatial dimension. 
In one dimension, for both values of $\tau$, no concentration occurs independently by $\chi$ and the ID, whereas in two or higher dimensions blow-up may arise, for some ID and $\chi$.  In particular, in the two-dimensional parabolic--elliptic setting, the critical mass phenomenon determines the threshold between global existence and blow-up. (The literature is vast, and more detailed references can be found in this survey \cite{BellomoEtAl}.)

Exactly to reduce/prevent this over-aggregation, and obtain more biologically realistic population dynamics, the \(u\)-equation was modified by researchers \cite{osaki2002exponential, winkler2010boundedness, zheng2018new} who proposed to incorporate a logistic source of the form \(au - bu^{2}, a, b>0\). More precisely, even an arbitrarily small logistic damping coefficient \(b > 0\) is sufficient to preclude any blow-up entirely in two dimensions, ensuring that all solutions are global in time and uniformly bounded for both the parabolic-elliptic (\(\tau = 0\)) and the fully parabolic (\(\tau =1\)) cases; see, respectively, \cite{TelloWinkParEl,osaki2002exponential}. 

Since this work is devoted to the analysis of system \eqref{Model-KS} with non-logistic source terms (i.e., not of the form $au - bu^2$), we shall highlight the features that distinguish the source term $f$ considered here from those commonly treated in the literature.  More precisely, to give a complete formulation of the problem, we now explicitly specify the boundary conditions and the initial data. So, introducing the notation $\nu$ for the outward unit normal vector on $\partial \Omega$, the general chemotaxis--growth system can be written as follows:
\begin{equation}\label{eq:chem-gompertz}
    \begin{cases}
        u_t = \Delta u - \chi \nabla \cdot (u\nabla v) + f(u), & \textrm{on  }\; \Omega \times (0,T_{\max}),\\[4pt]
        \tau v_t = \Delta v - v + u, & \textrm{on  }\; \Omega \times (0,T_{\max}),\\[4pt]
        \dfrac{\partial u}{\partial \nu} = \dfrac{\partial v}{\partial \nu} = 0, & \textrm{in  }\; \partial \Omega \times (0,T_{\max}),\\[4pt]
        u(x,0)=u_0(x)\geq 0,\;\tau v(x,0)=\tau v_0(x)\geq 0, & x\in\bar{\Omega}.
    \end{cases}
\end{equation}
The objective of this work is to analyze problem \eqref{eq:chem-gompertz} when the function $f$ is of Gompertz-type form, i.e. \(
f(u) = \alpha u \ln\left(\frac{K}{u}\right),
\)
with \(\alpha>0\), representing the intrinsic growth rate and \(K > 0\) the carrying capacity. From the biological perspective,  the Gompertz growth function, which is well-documented in the literature on cancer biology literature (\cite{vaghi2020population}), provides a more accurate representation of tumor cell proliferation than logistic growth. In contrast to the logistic model, which assumes symmetric growth around an inflection point, the Gompertz model exhibits an asymmetric exponential decay of the growth rate, better reflecting the deceleration of tumor growth as it approaches its carrying capacity. All these features of the Gompertz curve make it more useful to fit data on some growth processes (see, for instance, \cite{dhar2018comparison}). If we further add that, in the context of chemotaxis models, the literature that addresses this analysis remains, to our knowledge, rather limited (see items \ref{Item1Winkler} and \ref{Item2Xiang}, below), we believe that the present investigation is naturally motivated and can contribute to further developments in the field.
\begin{enumerate}[I.]
\item \label{Item1Winkler} For a set of functions for which the Gompertz function is an element, in \cite{Winkler2011-BlowUHighDimensionLogistic} it is shown that for a close model  to \eqref{eq:chem-gompertz}, when $n\geq 5$ 
 there exist initial data such that the corresponding solution blows up in finite time. 
\item \label{Item2Xiang} In \cite{xiang2018sub} for a set of sub-logistic sources with a kinetic prototype $a s - \frac{b s^2}{\ln(\ln(s + e)}$, $a,b>0$, and $s> 0$, it is established that in two dimensions all solutions are uniformly bounded under some restrictions connecting $\chi$, the initial mass $\int_\Omega u_0(x)dx$ and some asymptotic aspects of $f(u)$. 
\end{enumerate}
Specifically, if $f_{\text{G}}$ denotes the Gompertz function, $f_{\text{L}}$ the classical logistic function, and $f_{\text{S}}$ the logistic-type function introduced in item \ref{Item2Xiang}, one can observe the order $
f_{\text{G}} < f_{\text{S}} \lesssim f_{\text{L}},$
which indicates that $f_{\text{G}}$ has a very weakly limited damping (i.e., mortality is barely constrained) for large $u$, $f_{\text{S}}$ produces an intermediate damping effect, and $f_{\text{L}}$ exerts the strongest damping.

In particular, as the Gompertz term does not belong to the class of functions considered in \cite{xiang2018sub} (see \cite[Remark 1.2]{xiang2018sub}), it appears natural to devote attention to studying the boundedness issue in model \eqref{eq:chem-gompertz} for external reactions that behave like $f_{\text{G}}$, i.e., for Gompertz sources. 

This is precisely what we aim to do in this paper, and in order to present our result, we assume that:
\begin{equation}\label{AssumptionDomain}
\begin{cases}
\textrm{for some } \delta \in (0,1), \; \Omega \subset \mathbb{R}^2 \textrm{ is a bounded domain of class } C^{2+\delta}, \\
\textrm{for some } \alpha,K>0, f(s)=\alpha s \ln\left(\frac{K}{s}\right), s>0, \\
\chi>0, \tau\in\{0,1\} \textrm{ and } 0<u_{0},\; 0 \leq  \tau v_{0} \in C^{2+\delta}(\overline{\Omega}),
\quad \text{ with }
\partial_{\nu} u_{0} = \partial_{\nu} (\tau v_{0}) = 0\quad \text{on } \partial\Omega. \\
\end{cases}
\end{equation}
We will establish this
\begin{theorem}\label{MainTheorem}
Under assumptions \eqref{AssumptionDomain}, there exists a positive constant $C_{GN}$ such that whenever 
\begin{equation}\label{AssumptionsTheorem}
 K>e^{-\frac{2}{\alpha}} \quad \textrm{and} \quad \chi M\leq \frac{1}{2 C_{GN}^4},    
\end{equation}
being $M:=\max\left\{\int_\Omega u_0(x)dx,K|\Omega|\right\}$, it is possible to find a uniquely determined pair of functions $\
(u,v) \in 
C^{2+\delta,\,1+\frac{\delta}{2}}(\overline{\Omega} \times [0,\infty)) 
\times 
C^{2+\delta,\,\tau+\frac{\delta}{2}}(\overline{\Omega} \times [0,\infty)),$
solving problem \eqref{eq:chem-gompertz} and such that $0<u,v \in L^\infty(\Omega \times (0,\infty)).$
\end{theorem}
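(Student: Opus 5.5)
Our approach is to combine local existence and extensibility, a uniform mass bound, and an $L^2$ energy estimate in which the smallness of $\chi M$ absorbs the cross-diffusion term. First we invoke the standard local theory for \eqref{eq:chem-gompertz}. Since $f(s)=\alpha s\ln(K/s)$ extends continuously to $s=0$ with $f(0)=0$ and is locally Lipschitz on $(0,\infty)$, and $u_0>0$, the comparison principle keeps $u>0$. This gives a unique classical solution on $\Omega\times(0,\TM)$ with the stated H\"older regularity and $u,v>0$. If $\TM<\infty$, then $\|u(\cdot,t)\|_{L^\infty(\Omega)}\to\infty$. Hence it suffices to prove a bound on $\|u(\cdot,t)\|_{L^\infty}$ independent of $t$, and by parabolic regularity (Moser iteration or semigroup estimates in two dimensions) it suffices to bound $\|u(\cdot,t)\|_{L^2(\Omega)}$ uniformly.

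Next comes the mass bound. Integrating the $u$-equation gives
\[
\frac{d}{dt}\int_\Omega u=\alpha\int_\Omega u\ln K-\alpha\int_\Omega u\ln u .
\]
By Jensen's inequality, $\int_\Omega u\ln u\ge m\ln(m/|\Omega|)$ with $m=\int_\Omega u$. Therefore $m'\le \alpha m\ln(K|\Omega|/m)$, and an ODE comparison yields $\int_\Omega u(\cdot,t)\le M$ for all $t$, since the right-hand side is negative once $m>K|\Omega|$. For $\tau=1$ this also controls $\int_\Omega v$.

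The core step is the $L^2$ estimate. Testing the $u$-equation by $u$ gives
\[
\frac12\frac{d}{dt}\int_\Omega u^2+\int_\Omega|\nabla u|^2=\chi\int_\Omega u\nabla u\cdot\nabla v+\alpha\int_\Omega u^2\ln\frac{K}{u}.
\]
For $\tau=0$ we write $\chi\int_\Omega u\nabla u\cdot\nabla v=\frac{\chi}{2}\int_\Omega u^2(u-v)\le\frac{\chi}{2}\int_\Omega u^3$. The two-dimensional Gagliardo--Nirenberg inequality $\|u\|_{L^3}^3\le C_{GN}^4\|\nabla u\|_{L^2}^2\|u\|_{L^1}+c\|u\|_{L^1}^3$, applied with the mass bound, then gives $\frac{\chi}{2}\int_\Omega u^3\le \frac{\chi M C_{GN}^4}{2}\int_\Omega|\nabla u|^2+C\le\frac14\int_\Omega|\nabla u|^2+C$ under \eqref{AssumptionsTheorem}. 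The precise constant $C_{GN}$ in the theorem is fixed by this step. For $\tau=1$ we instead consider $\int_\Omega u^2+\int_\Omega|\Delta v|^2$, or a functional of the form $\int_\Omega u\ln u$ plus $\int_\Omega|\nabla v|^2$, and use the same inequality together with maximal regularity, with the threshold adjusted accordingly.

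The Gompertz term is the main obstacle, because it provides only very weak damping. We intend to use $\alpha\int_\Omega u^2\ln(K/u)\le -\alpha\int_\Omega u^2\ln u+\alpha\ln K\int_\Omega u^2$ and to add $\int_\Omega u^2$ to both sides. The resulting term $(1+\alpha\ln K)\int_\Omega u^2$ has a sign controlled by $K>e^{-2/\alpha}$, which is where this hypothesis enters. The positive part is handled by Gagliardo--Nirenberg again, $\int_\Omega u^2\le\varepsilon\int_\Omega|\nabla u|^2+C_\varepsilon M^2$. The term $-u^2\ln u$ is nonpositive for $u\ge1$ and bounded for $u<1$. This yields $y'+y\le C$ for $y=\int_\Omega u^2$, hence a uniform $L^2$ bound, and then the $L^\infty$ bound, global existence and uniform boundedness follow.
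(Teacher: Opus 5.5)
Your $\tau=0$ argument is correct, and more direct than the paper's. There, $-\Delta v=u-v\le u$ turns the cross term into $\frac{\chi}{2}\int_\Omega u^3$, which the 2D inequality $\|u\|_{L^3}^3\le C\|\nabla u\|_{L^2}^2\|u\|_{L^1}+C\|u\|_{L^1}^3$ and the mass bound absorb when $\chi M$ is small; using your own constant is legitimate, since the theorem only asserts that some $C_{GN}$ exists. The paper instead first bounds $\int_\Omega u\ln u$ through an entropy functional and only then reaches $L^2$ via $\|u\|_{L^3}^3\le\epsilon\|\nabla u\|_{L^2}^2\|u\ln u\|_{L^1}+c(\epsilon)(\|u\|_{L^1}^3+1)$.

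The genuine gap is the fully parabolic case $\tau=1$, which is half of the statement and its main difficulty, and for which you give no argument. Testing by $u$ leaves $-\frac{\chi}{2}\int_\Omega u^2\Delta v$, and $\Delta v$ is no longer slaved to $u$. The only a priori information on $v$ is an $L^1$ bound. Interpolating $\int_\Omega u^2|\Delta v|$ against the dissipation $\int_\Omega|\nabla u|^2+\int_\Omega|\nabla\Delta v|^2$ using only $\|u\|_{L^1}$ and $\|v\|_{L^1}$ gives a total power of the dissipative norms above two; for example, the $L^3$--$L^3$ split gives $\frac43+\frac56$, and the critical case would need $W^{1,2}(\Omega)\hookrightarrow L^\infty(\Omega)$. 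So the functional $\int_\Omega u^2+\int_\Omega|\Delta v|^2$ does not close under smallness of $\chi M$. Naming the alternative $\int_\Omega u\ln u+\int_\Omega|\nabla v|^2$ without computing its derivative does not supply the mechanism either.

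What is missing is the coupling the paper uses. Take $F=\int_\Omega u\ln u+\frac{\chi}{2}\int_\Omega|\nabla v|^2$. The two cross terms add up to $-2\chi\int_\Omega u\Delta v\le\chi\int_\Omega u^2+\chi\int_\Omega|\Delta v|^2$.
\begin{itemize}
\item The $|\Delta v|^2$ part is cancelled exactly by the dissipation $-\chi\int_\Omega|\Delta v|^2$ coming from $\frac{\chi}{2}\frac{d}{dt}\int_\Omega|\nabla v|^2$.
\item The $u^2$ part is absorbed by the Fisher information $\int_\Omega\frac{|\nabla u|^2}{u}$ as soon as $\chi M\le\frac{1}{2C_{GN}^4}$. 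For this, Gagliardo--Nirenberg must be applied to $\sqrt u$, giving $\int_\Omega u^2\le 2C_{GN}^4M\int_\Omega\frac{|\nabla u|^2}{u}+c_0$, since the entropy dissipates $\int_\Omega\frac{|\nabla u|^2}{u}$ and not $\int_\Omega|\nabla u|^2$.
\item The Gompertz contribution $\alpha\ln K\int_\Omega u\ln u-\alpha\int_\Omega u(\ln u)^2+\alpha\int_\Omega u\ln\frac{K}{u}$ becomes $-2\int_\Omega u\ln u+c$ by Young's inequality, so $F'\le c-2F$.
\end{itemize}
The Young step is where the paper invokes $K>e^{-2/\alpha}$. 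Your claim that this hypothesis fixes the sign of $1+\alpha\ln K$ is wrong, since it only gives $1+\alpha\ln K>-1$. It is harmless for $\tau=0$, because $s^2\ln(K/s)\le\frac{K^2}{2e}$ already bounds the Gompertz term in your $L^2$ test.

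Even with $F$ bounded, your reduction to an $L^2$ bound needs a further step for $\tau=1$. You need the 2D criterion that bounds on $\int_\Omega u\ln u$ and $\int_\Omega|\nabla v|^2$ yield an $L^\infty$ bound (Bellomo et al., Lemma 3.3).

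Alternatively, you could keep your $L^2$ test and use maximal Sobolev regularity in time-integrated form. After $\int_\Omega u^2|\Delta v|\le\frac23\int_\Omega u^3+\frac13\int_\Omega|\Delta v|^3$, the estimate $\int_0^t e^s\|\Delta v\|_{L^3}^3\,ds\le C_{MR}\int_0^t e^s\|u\|_{L^3}^3\,ds+C$ closes for small $\chi M$. The threshold then also depends on $C_{MR}$, which the existential form of the theorem permits, but this has to be carried out, not just named.

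A smaller point: $f'(s)\to+\infty$ as $s\to0^+$, so your extension criterion must also exclude $\inf_\Omega u(\cdot,t)\to0$ at a finite $T_{\max}$. Saying the comparison principle keeps $u>0$ does not give this quantitative lower bound. The paper obtains it with the constant subsolution $c_*\le Ke^{-\chi c_v/\alpha}$.
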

\section{Local existence and boundedness criterion}
The first result deals with the local-well-posedness of model \eqref{eq:chem-gompertz}; a crucial properties is played by the  extension criterion, in which it remains emphasized the role of the logistic used in this investigation and thanks to which a barrier method can be performed. 
\begin{lemma}\label{LocalExistenceLemma}
Let assumptions \eqref{AssumptionDomain} be fulfilled. 
Then there exist
$T_{\max} \in (0,\infty]$ and a uniquely determined pair of positive functions
\[
(u,v) \in 
C^{2+\delta,\,1+\frac{\delta}{2}}(\overline{\Omega} \times [0,T_{\max})) 
\times 
C^{2+\delta,\,\tau+\frac{\delta}{2}}(\overline{\Omega} \times [0,T_{\max})),
\]
solving problem \eqref{eq:chem-gompertz}, and such that
\begin{equation}\label{DichotomyCriterion}
 \text{ either }  T_{\max}=\infty,  \text{ or } \limsup_{t \to T_{\max}} \|u(\cdot,t)\|_{L^\infty(\Omega)} = \infty.
\end{equation}
\begin{proof}
By consistently defining $f(0)=0$, the existence of a short time $0<T=T(u_0,\tau v_0)<1$ and of a classical solution $(u,v)$ follow from standard fixed point procedures (see, for instance, \cite{BaghaeiEtAl2026JDE} for details developed  in the same regularity setting herein posed).
Additionally, by prolonging the solution beyond $T$, one can find $\TM\in (0,\infty]$ with the property that if $T_{\max} < \infty$
\begin{equation}\label{DichotomyCriterionBIS}
 \text{ then either }
\limsup_{t \to T_{\max}} \|u(\cdot,t)\|_{L^\infty(\Omega)} = \infty
\quad \text{or} \quad
\liminf_{t \to T_{\max}} \inf_{x \in \Omega} u(x,t) = 0.
\end{equation}  
Indeed, if for some finite $T_{\max}$ there were constants $C>0$ and $c>0$ such that $$
\sup_{t<T_{\max}} \|u(\cdot,t)\|_{L^\infty(\Omega)} \le C
\quad\text{and}\quad
\inf_{t<T_{\max}} \inf_{x\in\Omega} u(x,t) \ge c > 0,$$
from the one hand the function $
f(u)=\alpha u\ln(K/u)$ would be smooth and globally Lipschitz on $[c,C]$, and from the other (by using in the equation for $v$ classical elliptic and parabolic regularity results as those in \cite{BrezisBook,LSUBookInequality}), one would obtain that $
\sup_{t<T_{\max}} \|\nabla v(\cdot,t)\|_{C^{\delta_1}(\bar{\Omega})} < \infty$, for some $0<\delta_1<1$. In turn, exploiting this information in the equation for $u$,  would yield $
\sup_{t<T_{\max}}
\|u(\cdot,t)\|_{C^{2+\delta_1}(\overline{\Omega})}
< \infty$, this allowing to consider $u(\cdot,T_{\max})$ as an admissible initial data. As a consequence, a classical solution on
$(T_{\max},T_{\max}+\rho)$ for some $\rho>0$, could be obtained, contradicting the maximality of $T_{\max}$. At this stage, noting that $\sup_{t<T_{\max}}
\|u(\cdot,t)\|_{C^{2+\delta_1}(\overline{\Omega})}
< \infty$ allows to find $c_v = \|\Delta v(\cdot,t)\|_{L^\infty(\Omega)}$ on $(0,T_{\max})$, the dichotomy criterion \eqref{DichotomyCriterionBIS} can be actually refined. If, in fact, we assume by contradiction that $T_{\max} < \infty$ and
\[
\limsup_{t \to T_{\max}} \|u(\cdot,t)\|_{L^\infty(\Omega)} < \infty,
\]
since $-\alpha \ln\left(\frac{K}{s}\right) \searrow -\infty$ as $s \to 0^+$,  any constant function $0<\underline{u}=c_* \leq K e^\frac{-\chi c_v}{\alpha }$ is such that 
\[
\underline{u}_t - \Delta \underline{u} + \chi \nabla \cdot (\underline{u} \nabla v) - \alpha \, \underline{u} \ln\left( \dfrac{K}{\underline{u}}\right)\leq  \underline{u}\chi\|\Delta v(\cdot,t)\|_{L^\infty(\Omega)}-\alpha \underline{u} \ln\left(\frac{K}{\underline{u}}\right)\leq \underline{u}\left(\chi c_v-\alpha \ln\left(\frac{K}{\underline{u}}\right)\right) \leq 0 \quad \textrm{on } (0,T_{\max}). 
\]
Being $\underline{u}$ is a subsolution to the first equation in \eqref{eq:chem-gompertz}, a parabolic comparison principle implies
\[
u(x,t) \ge \min\left\{c_*,\min_{x\in \bar{\Omega}}u_0(x)\right\}>0 \quad \textrm{for all } (x,t)\in \Omega \times (0,T_{\max}),
\]
so that the second alternative in \eqref{DichotomyCriterionBIS} is ruled out and \eqref{DichotomyCriterion} is established.
%
%


%
%
%
%
Finally, the parabolic and elliptic strong maximum principles imply as well $v>0$ throughout $\overline{\Omega} \times (0,T_{\max})$, whereas the uniqueness of the solution can be established following arguments analogous to the aforementioned \cite{BaghaeiEtAl2026JDE}.
\end{proof}
\end{lemma}  
Indicating, from now on, with $(u,v)$ the local solution to model \eqref{eq:chem-gompertz} defined in $\Omega \times (0,T_{\max})$ and provided by Lemma \ref{LocalExistenceLemma}, we also will rely on this boundedness criterion:
\begin{lemma}\label{LemmaBoundendesCriterion} 
If $(u,v)$ satisfies
\begin{equation}\label{ApripriBound}
\int_\Omega u \ln (u)\le L \quad \text{and} \quad
\int_\Omega |\nabla v|^2 \le L \quad \text{for all } t \in (0,T_{\max})
\end{equation}
with a positive constant $L$, then there exists $H>0$ such that
\[
\|u(\cdot,t)\|_{L^\infty(\Omega)}  \le H
\quad \text{for all } t \in (0,\infty).
\]
\begin{proof}
The proof is a consequence of \cite[Lemma 3.3.]{BellomoEtAl} (which is based on \cite[Lemma 3.2.]{BellomoEtAl}), and the dichotomy criterion \eqref{DichotomyCriterion}.
\end{proof}
\end{lemma}
As a consequence of this result, global boundedness is achieved once the a priori estimates
in \eqref{ApripriBound} are obtained.
\section{A priori estimates and proof of Thorem \ref{MainTheorem}}
We will rely on these properties of $u$.
\begin{lemma}\label{LemmaMass}
The $u$-component of $(u,v)$ is such that 
\begin{equation}\label{BoundednessMass}
\int_\Omega u\leq M:=\max\left\{\int_\Omega u_0(x)dx,K|\Omega|\right\} \quad \textrm{on } (0,T_{\max}).
\end{equation}
As a consequence, there is $C_{GN}>0$ such that, for some $c_0>0$,
\begin{equation}\label{EstimateintuSquare} 
\int_\Omega u^2\leq 2 C_{GN}^4 M\int_\Omega \dfrac{|\nabla u|^2}{u}+c_0 \quad \textrm{on } (0,T_{\max}).
\end{equation}
\begin{proof}
Integrating \eqref{eq:chem-gompertz} and using the divergence theorem together with homogeneous Neumann boundary conditions, we obtain
\begin{equation}\label{DerivativeMAss}
\frac{d}{dt}\int_\Omega u =\alpha \int_\Omega u \ln \left(\frac{K}{u}\right), \quad   \text{for all } t\in (0,T_{\max}).
\end{equation}
%
%
Letting $\phi(s) := s \ln(K/s)$, for $s>0$, it is seen that 
$\phi$ is concave, so that by Jensen's inequality, we have
\[
\frac{1}{|\Omega|} \int_\Omega \phi(s) ds \le \phi\Big( \frac{1}{|\Omega|} \int_\Omega s  ds \Big).
\]
By applying this estimate to $u=u(x,t)$ leads to
\[
\frac{1}{|\Omega|} \int_\Omega u \ln\left(\frac{K}{u}\right) \le  \frac{1}{|\Omega|} \int_\Omega u \ln\left(\frac{K}{\frac{1}{|\Omega|} \int_\Omega u}\right)\quad \textrm{for all } t\in (0,T_{\max}).
\]
Now we plug the last bound into relation \eqref{DerivativeMAss} so to achieve, for all $t\in (0,T_{\max})$ and for the function $z=z(t):=\int_\Omega u$, the following ODI
\[
z'\leq  \alpha z\ln \left(\frac{K|\Omega|}{z}\right)  \quad \textrm{on } (0,T_{\max}), 
\quad 
z(0)=\int_\Omega u_0(x)dx,
\]
which by comparison provides \eqref{BoundednessMass}.

As to the second claim, by means of the Gagliardo--Nirenberg inequality we can write
\begin{equation}\label{GN_Ineq}
\int_\Omega u^2
=\int_\Omega (\sqrt u)^4=\|\sqrt u\|_{L^4(\Omega)}^4\leq C_{GN}^4\Big[\|\nabla\sqrt u\|^{\frac{1}{2}}_{L^2(\Omega)} \|\sqrt u\|^{\frac{1}{2}}_{L^2(\Omega)}+\|\sqrt u\|_{L^2(\Omega)}\Big]^4\textrm{ for all } t\in (0,T_{\max}).
\end{equation}
Given the elementary relation $(A+B)^q \leq 2^{q-1}(A^q +B^q)$, valid for all nonnegative $A$ and $ B$ and $q\geq 1$, inequalities \eqref{GN_Ineq} and  \eqref{BoundednessMass} provide \eqref{EstimateintuSquare} for some $c_0$.
\end{proof}
\end{lemma}
In order to exploit the boundedness criterion in Lemma \ref{LemmaBoundendesCriterion}, we only need to derive the bounds in \eqref{ApripriBound}. In this direction,  we define 
\begin{equation}\label{DefiFunctionaF}
    F(t)=\int_\Omega u \ln (u) +\tau b\int_\Omega |\nabla v|^2:= I_1(t)+I_2(t)\quad \textrm{for all } t\in (0,T_{\max}),
\end{equation}
with 
\begin{equation}\label{Valueofb}
  b = \frac{\chi}{2}, \quad  \textrm{ implying } 
  -1 + \frac{(\chi + 2 b)^2}{4 b} C_{GN}^4 M=
-1 + 2 \chi M C_{GN}^4, 
\end{equation}
 where the constants $M$ and $C_{GN}^4$ have been introduced in Lemma \ref{LemmaMass}.
Our aim is establishing an absorption inequality of the type
\begin{equation}\label{AbsortionForF}
    F'(t)\leq \delta_1-\delta_2 F(t) \quad \textrm{ on } (0,T_{\max}),\quad \textrm{combined with } 
    F(0)=\int_\Omega u_0(x)\ln (u_0(x))dx+\tau b\int_\Omega |\nabla v_0(x)|^2dx,
\end{equation}
naturally entailing (for $\delta_1,\delta_2>0$)
\begin{equation}\label{BoundednessForF} 
\int_\Omega u \ln (u) +\tau b\int_\Omega |\nabla v|^2=F(t)\le \max\left\{F(0),\,\frac{\delta_1}{\delta_2}\right\} \textrm{ for all } t\in (0,T_{\max}).
\end{equation}
\paragraph{Proof of Theorem \ref{MainTheorem}}  
Let us differentiate with respect to the time the functional $F=F(t)$, introduced in \eqref{DefiFunctionaF}. Some algebraic calculations provide, in conjunction with the divergence theorem,
\begin{equation}\label{MainDerivationF}
    \begin{split}
    F'(t)&=I'_1(t)+I'_2(t)= \int_\Omega u_t \left( \ln (u)  + 1\right) +2 \tau b \int_\Omega \nabla v \cdot \nabla v_t= \int_{\Omega} \left(\Delta u-\chi \nabla \cdot (u \nabla v)+\alpha u \ln \left(\frac{K}{u}\right)\right)\left( \ln (u)  + 1\right)\\
    & \quad +2\tau b \int_{\Omega} \nabla v \cdot \nabla (\Delta v-v+u)\\
    &\leq -\int_\Omega \dfrac{|\nabla u|^2}{u}\!-\!(\chi  +2 b\tau)\int_\Omega u \Delta v+\alpha \ln(K)\int_\Omega u\ln (u) - \alpha \int_\Omega u(\ln (u))^2 +\alpha\int_\Omega u \ln\left(\frac{K}{u}\right)\\
    &\quad -2b\tau\int_\Omega |\Delta v|^2 -2b\tau\int_\Omega |\nabla v|^2 \quad \textrm{on } (0,T_{\max}).
    \end{split}
\end{equation}
By employing Young's inequality for $\tau=1$, and $\chi\leq \dfrac{(\chi  + 2 b)^2}{8 b }$ for $\tau=0$ (in this case using $\Delta v=v-u$), once estimate \eqref{EstimateintuSquare} is invoked, for both $\tau\in\{0,1\}$ and on $(0,T_{\max})$ the following holds true: 
\begin{equation}\label{EstimateLaplacianVu}
   -(\chi  + 2 b\tau)\int_\Omega u \Delta v\leq \dfrac{(\chi  + 2 b)^2}{8 b }\int_\Omega u^2 + 2 b\tau \int_\Omega |\Delta v|^2 
   \leq \dfrac{(\chi  + 2 b)^2}{4 b } C_{GN}^4 M\int_\Omega \dfrac{|\nabla u|^2}{u} + 2 b\tau \int_\Omega |\Delta v|^2 +c_0.  
\end{equation}
On the other hand,  by using Young's inequality and \eqref{BoundednessMass}, and by exploiting now the first assumption on \eqref{AssumptionsTheorem}, we have
\[
- \alpha \int_\Omega u(\ln (u))^2\leq -(\alpha \ln K+2)\int_\Omega u\ln (u) +\dfrac{M (\alpha \ln (K)+2)}{4\alpha} \quad \textrm{for all } t\in (0,T_{\max}), 
\]
which gives for $c_1=\dfrac{M (\alpha \ln (K)+2)}{4\alpha }+\alpha |\Omega|\frac{K}{e}$
\begin{equation}\label{Estimtateulogusquare}
    \alpha \ln(K)\int_\Omega u\ln (u) - \alpha \int_\Omega u(\ln (u))^2 +\alpha\int_\Omega u \ln\left(\frac{K}{u}\right)\leq -2\int_\Omega u\ln (u)+c_1 \quad \textrm{for all } t\in (0,T_{\max}),
\end{equation}
and where we  used $\alpha s \ln\left(\frac{K}{s}\right)\leq \alpha \frac{K}{e}$, for all $s\geq 0.$

At this stage, we plug bounds \eqref{EstimateLaplacianVu} and \eqref{Estimtateulogusquare} into \eqref{MainDerivationF} so that we can write 
\[
F'(t)\leq \left(-1 + \frac{(\chi + 2 b)^2}{4b} C_{GN}^4 M\right) \int_\Omega \dfrac{|\nabla u|^2}{u} +c_0 +c_1-2\left(\int_\Omega u\ln (u)  +b\tau\int_\Omega |\nabla v|^2\right) \quad \textrm{for all } t\in (0,T_{\max}),
\]
and by recalling what analyzed in \eqref{Valueofb} and by 
invoking again the second assumption in \eqref{AssumptionsTheorem},
we arrive at the desired inequality in \eqref{AbsortionForF}.
Let us  distinguish the situations  $\tau=1$ and $\tau=0$.

{\bf Case $\tau=1$: } From estimate \eqref{BoundednessForF}, we entail the uniform-in-time bound of $I_1(t)=\int_\Omega u \ln (u)$ and $I_2(t)=b\int_\Omega |\nabla v|^2$ on $(0,T_{\max})$ so concluding by means of Lemma \ref{LemmaBoundendesCriterion}.

{\bf Case $\tau=0$: }  Estimate \eqref{BoundednessForF} yields a control on $(0,T_{\max})$ only of $I_1(t)=\int_\Omega u \ln (u)$, being necessary a control also on $I_2(t)=\int_\Omega |\nabla v|^2$. Let us deal with this issue. By multiplying the $u$-equation in \eqref{eq:chem-gompertz} by $u$, integrations by parts give  
     \[
     \dfrac{1}{2}\dfrac{d}{dt}\int_\Omega u^2 + \int_\Omega |\nabla u|^2=-\dfrac{\chi}{2}\int_\Omega u^2\Delta v + \alpha \int_\Omega u\ln\left(\frac{K}{u}\right) \quad \textrm{on } (0,T_{\max}).     
     \]
  In particular, from  $\Delta v=v-u$, we obtain 
  \begin{equation}\label{estimation_lem}
      \dfrac{1}{2}\dfrac{d}{dt}\int_\Omega u^2 + \int_\Omega |\nabla u|^2=\dfrac{\chi}{2}\int_\Omega u^3 -\dfrac{\chi}{2}\int_\Omega u^2v+\alpha \int_\Omega u\ln\left(\frac{K}{u}\right)\leq \dfrac{\chi}{2}\|u\|^3_{L^3(\Omega)}+\alpha \int_\Omega u\ln\left(\frac{K}{u}\right) \quad \textrm{on } (0,T_{\max}).
 \end{equation}
     Now, we estimate $\|u\|^3_{L^3(\Omega)}$ by using the extended result of the Gagliardo--Nirenberg inequality from (\cite{BellomoEtAl}, Lemma 3.3, page 1678)), as well as the boundedness of $\|u\ln (u)\|_{L^1(\Omega)}\leq L$ and \eqref{BoundednessMass}, so to obtain 
     \begin{equation}\label{IneqPrevious}
        \|u\|^3_{L^3(\Omega)}\leq \epsilon \|\nabla u\|^2_{L^2(\Omega)}\|u\ln (u)\|_{L^1(\Omega)}+c(\epsilon)\left(\|u\|^3_{L^1(\Omega)}+1\right) \leq \epsilon \|\nabla u\|^2_{L^2(\Omega)}L+c(\epsilon)\left(M^3+1\right)\quad \textrm{on } (0,T_{\max}).     
     \end{equation}
     By choosing  $\epsilon =1/(\chi L)$ in \eqref{IneqPrevious} and inserting the resulting estimate into \eqref{estimation_lem}  provides, once the relation  $\alpha s\ln\left(\frac{K}{s}\right)\leq \alpha K/e$, $s\geq 0$, and  \eqref{BoundednessMass} 
are taken into account,  some $c_2>0$ with the property that
     \begin{equation}\label{Ineq_ellip}
     \dfrac{d}{dt}\|u\|^2_{L^2(\Omega)}+\|\nabla u\|^2_{L^2(\Omega)}\leq c_2 \quad \textrm{for all } t\in (0,T_{\max}).     
     \end{equation}
On the other hand, a further application of the (already exploited) Gagliardo--Nirenberg and Young's inequalities, supported by the boundedness of the mass in  \eqref{BoundednessMass}, leads for some positive $c_3$ (and recalling $(A+B)^2\leq 2(A^2+B^2)$) to
     \[
     \int_\Omega u^2=\|u\|^2_{L^2(\Omega)}\leq c_{GN}^2\left[\|\nabla u\|_{L^2(\Omega)}^\frac{1}{2}  \|u\|_{L^1(\Omega)}^\frac{1}{2}+c_{GN} \| u\|_{L^1(\Omega)}\right]^2\leq \|\nabla u\|_{L^2(\Omega)}^2+c_3 \quad \textrm{for all } t\in (0,T_{\max}),     
     \]
     which plugged into \eqref{Ineq_ellip} gives
     \[
         \frac{d}{dt}\int_\Omega u^2 \leq c_2+c_3-\int_\Omega u^2 \quad \textrm{on } (0,T_{\max}), \quad \textrm{complemented with } \left(\int_\Omega u^2\right)_{t=0}=\int_\Omega u_0(x)^2dx,
     \]
ensuring that $\|u\|_{L^2(\Omega)}$ is bounded on $(0,T_{\max})$. Therefore, by exploiting $u\in L^\infty((0,T_{\max}); L^2(\Omega))$ in the equation for $v$ in \eqref{eq:chem-gompertz},  results of elliptic regularity imply $\nabla v\in L^\infty((0,T_{\max}); W^{1,2}(\Omega))$;  definitively, $I_1(t)=\int_\Omega u \ln (u)$ and $I_2(t)=b\int_\Omega |\nabla v|^2$ are uniformly bounded on $(0,T_{\max})$ and we have the claim by invoking again Lemma \ref{LemmaBoundendesCriterion}.

\paragraph*{Acknowledgments.}  GV is member of the Gruppo Nazionale per l’Analisi Matematica, la
Probabilità e le loro Applicazioni (GNAMPA) of the Istituto Nazionale di Alta Matematica (INdAM),
and is partially supported by the research projects \textit{Partial Differential Equations and their role in understanding natural phenomena} (2023, CUP F23C25000080007), funded by Fondazione di Sardegna, and \textit{Modelli di reazione-diffusione-trasporto: dall'analisi alle applicazioni } (2025, CUP E53C25002010001), funded by GNAMPA-INdAM. NA is supported by the project I-MAROC ``Artificial Intelligence/Applied Mathematics, Health/Environment: Simulation for Decision Support''

\bibliographystyle{cas-model2-names}


\begin{thebibliography}{14}
\expandafter\ifx\csname natexlab\endcsname\relax\def\natexlab#1{#1}\fi
\providecommand{\url}[1]{\texttt{#1}}
\providecommand{\href}[2]{#2}
\providecommand{\path}[1]{#1}
\providecommand{\DOIprefix}{doi:}
\providecommand{\ArXivprefix}{arXiv:}
\providecommand{\URLprefix}{URL: }
\providecommand{\Pubmedprefix}{pmid:}
\providecommand{\doi}[1]{\href{http://dx.doi.org/#1}{\path{#1}}}
\providecommand{\Pubmed}[1]{\href{pmid:#1}{\path{#1}}}
\providecommand{\bibinfo}[2]{#2}
\ifx\xfnm\relax \def\xfnm[#1]{\unskip,\space#1}\fi
\bibitem[{Baghaei et~al.(2026)Baghaei, Frassu, Tanaka and
  Viglialoro}]{BaghaeiEtAl2026JDE}
\bibinfo{author}{Baghaei, K.}, \bibinfo{author}{Frassu, S.},
  \bibinfo{author}{Tanaka, Y.}, \bibinfo{author}{Viglialoro, G.},
  \bibinfo{year}{2026}.
\newblock \bibinfo{title}{To what extent does the consideration of positive
  total flux influence the dynamics of {K}eller--{S}egel-type models?}
\newblock \bibinfo{journal}{J. Differential Equations} \bibinfo{volume}{452},
  \bibinfo{pages}{113808}.
\bibitem[{Bellomo et~al.(2015)Bellomo, Bellouquid, Tao and
  Winkler}]{BellomoEtAl}
\bibinfo{author}{Bellomo, N.}, \bibinfo{author}{Bellouquid, A.},
  \bibinfo{author}{Tao, Y.}, \bibinfo{author}{Winkler, M.},
  \bibinfo{year}{2015}.
\newblock \bibinfo{title}{{Toward a mathematical theory of Keller--Segel models
  of pattern formation in biological tissues}}.
\newblock \bibinfo{journal}{Math. Models Methods Appl. Sci.}
  \bibinfo{volume}{25}, \bibinfo{pages}{1663--1763}.
\bibitem[{Brezis(2011)}]{BrezisBook}
\bibinfo{author}{Brezis, H.}, \bibinfo{year}{2011}.
\newblock \bibinfo{title}{Functional Analysis, {S}obolev Spaces and Partial
  Differential Equations}. volume \bibinfo{volume}{Universitext}.
\newblock \bibinfo{publisher}{Springer-Verlag, New York}.
\bibitem[{Dhar and Bhattacharya(2018)}]{dhar2018comparison}
\bibinfo{author}{Dhar, M.}, \bibinfo{author}{Bhattacharya, P.},
  \bibinfo{year}{2018}.
\newblock \bibinfo{title}{Comparison of the logistic and the {Gompertz} curve
  under different constraints}.
\newblock \bibinfo{journal}{J. Stat. Manag. Syst.} \bibinfo{volume}{21},
  \bibinfo{pages}{1189--1210}.
\bibitem[{Keller and Segel(1970)}]{keller1970initiation}
\bibinfo{author}{Keller, E.F.}, \bibinfo{author}{Segel, L.A.},
  \bibinfo{year}{1970}.
\newblock \bibinfo{title}{Initiation of slime mold aggregation viewed as an
  instability}.
\newblock \bibinfo{journal}{J. Theor. Biol.} \bibinfo{volume}{26},
  \bibinfo{pages}{399--415}.
\bibitem[{Keller and Segel(1971)}]{keller1971model}
\bibinfo{author}{Keller, E.F.}, \bibinfo{author}{Segel, L.A.},
  \bibinfo{year}{1971}.
\newblock \bibinfo{title}{Model for chemotaxis}.
\newblock \bibinfo{journal}{J. Theor. Biol.} \bibinfo{volume}{30},
  \bibinfo{pages}{225--234}.
\bibitem[{Lady\v{z}enskaja et~al.(1988)Lady\v{z}enskaja, Solonnikov and
  Ural'ceva}]{LSUBookInequality}
\bibinfo{author}{Lady\v{z}enskaja, O.A.}, \bibinfo{author}{Solonnikov, V.A.},
  \bibinfo{author}{Ural'ceva, N.N.}, \bibinfo{year}{1988}.
\newblock \bibinfo{title}{{Linear and Quasi-Linear Equations of Parabolic
  Type}}, in: \bibinfo{booktitle}{{Translations of Mathematical Monographs}}.
  \bibinfo{publisher}{American Mathematical Society}.
  volume~\bibinfo{volume}{23}.
\bibitem[{Osaki et~al.(2002)Osaki, Tsujikawa, Yagi and
  Mimura}]{osaki2002exponential}
\bibinfo{author}{Osaki, K.}, \bibinfo{author}{Tsujikawa, T.},
  \bibinfo{author}{Yagi, A.}, \bibinfo{author}{Mimura, M.},
  \bibinfo{year}{2002}.
\newblock \bibinfo{title}{Exponential attractor for a chemotaxis-growth system
  of equations}.
\newblock \bibinfo{journal}{Nonlinear Anal. Theory Methods Appl.}
  \bibinfo{volume}{51}, \bibinfo{pages}{119--144}.
\bibitem[{Tello and Winkler(2007)}]{TelloWinkParEl}
\bibinfo{author}{Tello, J.I.}, \bibinfo{author}{Winkler, M.},
  \bibinfo{year}{2007}.
\newblock \bibinfo{title}{A chemotaxis system with logistic source}.
\newblock \bibinfo{journal}{Comm. Partial Differential Equations}
  \bibinfo{volume}{32}, \bibinfo{pages}{849--877}.
\bibitem[{Vaghi et~al.(2020)Vaghi, Rodallec, Fanciullino, Ciccolini, Mochel,
  Mastri, Poignard, Ebos and Benzekry}]{vaghi2020population}
\bibinfo{author}{Vaghi, C.}, \bibinfo{author}{Rodallec, A.},
  \bibinfo{author}{Fanciullino, R.}, \bibinfo{author}{Ciccolini, J.},
  \bibinfo{author}{Mochel, J.P.}, \bibinfo{author}{Mastri, M.},
  \bibinfo{author}{Poignard, C.}, \bibinfo{author}{Ebos, J.M.L.},
  \bibinfo{author}{Benzekry, S.}, \bibinfo{year}{2020}.
\newblock \bibinfo{title}{Population modeling of tumor growth curves and the
  reduced {Gompertz} model improve prediction of the age of experimental
  tumors}.
\newblock \bibinfo{journal}{PLOS Comput. Biol} \bibinfo{volume}{16},
  \bibinfo{pages}{e1007178}.
\bibitem[{Winkler(2010)}]{winkler2010boundedness}
\bibinfo{author}{Winkler, M.}, \bibinfo{year}{2010}.
\newblock \bibinfo{title}{Boundedness in the higher-dimensional
  parabolic-parabolic chemotaxis system with logistic source}.
\newblock \bibinfo{journal}{Commun. Partial Differ. Equ.} \bibinfo{volume}{35},
  \bibinfo{pages}{1516--1537}.
\bibitem[{Winkler(2011)}]{Winkler2011-BlowUHighDimensionLogistic}
\bibinfo{author}{Winkler, M.}, \bibinfo{year}{2011}.
\newblock \bibinfo{title}{Blow-up in a higher-dimensional chemotaxis system
  despite logistic growth restriction}.
\newblock \bibinfo{journal}{J. Math. Anal. Appl.} \bibinfo{volume}{384},
  \bibinfo{pages}{261--272}.
\bibitem[{Xiang(2018)}]{xiang2018sub}
\bibinfo{author}{Xiang, T.}, \bibinfo{year}{2018}.
\newblock \bibinfo{title}{Sub-logistic source can prevent blow-up in the 2d
  minimal {Keller--Segel} chemotaxis system}.
\newblock \bibinfo{journal}{J. Math. Phys.} \bibinfo{volume}{59}.
\bibitem[{Zheng et~al.(2018)Zheng, Li, Bao and Zou}]{zheng2018new}
\bibinfo{author}{Zheng, J.}, \bibinfo{author}{Li, Y.}, \bibinfo{author}{Bao,
  G.}, \bibinfo{author}{Zou, X.}, \bibinfo{year}{2018}.
\newblock \bibinfo{title}{A new result for global existence and boundedness of
  solutions to a parabolic--parabolic {Keller--Segel} system with logistic
  source}.
\newblock \bibinfo{journal}{J. Math. Anal. Appl.} \bibinfo{volume}{462},
  \bibinfo{pages}{1--25}.

\end{thebibliography}

\end{document}